\documentclass[11pt,a4paper]{article}

\usepackage[utf8]{inputenc}
\usepackage[T1]{fontenc}
\usepackage{lmodern}
\usepackage{microtype}
\usepackage{geometry}
\usepackage[dvipsnames]{xcolor}
\usepackage{amsmath,amssymb,amsthm,mathtools}
\usepackage{bm}
\usepackage{enumitem}
\usepackage{faktor}
\usepackage[colorlinks=true, linkcolor = MidnightBlue, citecolor= OliveGreen]{hyperref}
\usepackage[capitalise,nameinlink]{cleveref}
\usepackage{graphicx}
\usepackage{xcolor}
\usepackage[numbers]{natbib}
\usepackage{algorithm}
\usepackage{algpseudocode}
\usepackage{subcaption}

\title{Retraction based regression methods on Riemannian manifolds}
\author{Estefanía Loayza-Romero$^{1,}$\footnote{\texttt{estefania.loayza-romero@strath.ac.uk}} , Benedikt Sibum$^{2,}$\footnote{\texttt{sibumb@hsu-hh.de}} \, and Kathrin Welker$^{2,}$\footnote{\texttt{welker@hsu-hh.de}} }

\date{\small $^1$University of Strathclyde, 26 Richmond St, Glasgow G1 1XQ, United Kingdom\\[4pt]\small $^2$Helmut-Schmidt-University / University of the Federal Armed Forces Hamburg,\\ Holstenhofweg~85, 22043 Hamburg, Germany}

\theoremstyle{plain}
\newtheorem{theorem}{Theorem}[section]
\newtheorem{proposition}[theorem]{Proposition}

\theoremstyle{definition}

\newcommand{\RR}{\mathbb{R}}

\newcommand{\set}[2]{\left\{#1 \;\middle|\; #2\right\}}		

\newcommand{\norm}[1]{\left|\left| #1 \right|\right|}

\DeclareMathOperator{\id}{id}
\DeclareMathOperator{\grad}{grad}
\DeclareMathOperator{\Hess}{Hess}
\DeclareMathOperator{\dist}{dist}
\DeclareMathOperator{\Ker}{Ker}
\DeclareMathOperator{\sgn}{sgn}
\DeclareMathOperator{\diag}{diag}

\renewcommand{\d}{\operatorname{d}}
\newcommand{\hor}{\operatorname{hor}}
\newcommand{\ver}{\operatorname{ver}}
\newcommand{\eucl}{\operatorname{eucl}}

\begin{document}

\maketitle

\begin{abstract}
	Geodesic regression generalizes classical regression models to manifold-valued data by replacing affine models in Euclidean spaces with geodesic models on Riemannian manifolds. In this paper, we set up a framework for regression based on retractions instead of the Riemannian exponential map and its corresponding retraction-based distance. The associated optimization problem is posed on a subset of the tangent bundle which is why we additionally construct retractions on the tangent bundle induced by retractions on the underlying manifold. Our approach yields a more flexible formulation which is applicable beyond settings where the exponential map can be computed efficiently. As a proof of concept, we apply the developed framework to the $(n-1)$-dimensional $p$-norm sphere using the retraction by normalization to define the regression problem. The resulting optimization problem is solved using the Riemannian steepest descent method. 
\end{abstract}

\paragraph{Key words.}
Riemannian manifolds,
regression,
statistics,
geodesic,
retraction

\paragraph{AMS subject classifications.} 
53Z50,   
49Q12,   
65J22,    
49M05,  
65K05,    
68T01     


\section{Introduction}

Regression problems are a fundamental class of methods in statistics and machine learning. They allow us to determine how a measured variable is related to one or more independent variables. Classical regression theory is formulated in Euclidean spaces, where linear structures allow the use of affine models, standard optimization techniques and even explicit expressions can be obtained. However, in recent years, it has been acknowledge that in many applications data naturally belongs to nonlinear spaces that possess a non-Euclidean geometric structure. Such manifold-valued data arise, for instance, in robotics \cite{wang2019}, where rotations are represented by orthogonal matrices, in diffusion tensor imaging \cite{pennec2006, dryden2009}, where data is modeled by symmetric positive definite matrices, or in computer vision \cite{turaga2016, cherian2016} and shape analysis, \cite{ring2012, schulz2016}, where shapes are represented as points on infinite-dimensional shape manifolds. In these settings, classical Euclidean regression models fail to take into account the intrinsic geometry of the data, making the development of statistical methods on Riemannian manifolds necessary. One of the first intrinsic generalizations of linear regression to Riemannian manifolds using the Riemannian geometry is geodesic regression, introduced for example in \cite{fletcher2013, niethammer2011}, which laid the groundwork for further works such as \cite{shin2022, hong2012, hinkle2012, fishbaugh2014}. In this framework, Euclidean straight lines are replaced by geodesics, while the intercept and slope are represented by elements of the tangent bundle. A main computational challenge of geodesic regression is the repeated evaluation of the Riemannian exponential map, which requires the solution of the geodesic equation, i.e.,  of a second-order ordinary different equation. Depending on the underlying manifold and metric structure, this can lead to very expensive computational costs. This issue happens for example in shape optimization and shape analysis \cite{schulz2016}, where the optimization domain is often modeled as an infinite-dimensional Riemannian manifold, e.g., \cite{michor2006, bauer2014a}.
There are Riemannian metrics such as the Steklov–Poincaré metric \cite{schulz2016}, for which the geodesic equation has not been derived analytically. Since retractions approximate the exponential map and preserve the local geometric structure required for optimization on manifolds, they are a computationally efficient alternative \cite{absil2008}. 
Our paper is motivated by this observation. In this paper, we propose a generalized regression framework on Riemannian manifolds in which the exponential map is replaced by retractions. 
The resulting optimization problem is formulated on a subset of the tangent bundle, and thus, we investigate the construction of retractions on tangent bundles induced by retractions on the underlying manifold. 

The paper is organized as follows. \Cref{sec:Basics} gives a short overview of the required concepts on differential geometry and sets up our notation and terminology. We then formulate the regression problem on general Riemannian manifolds using retractions instead of the exponential map in \Cref{sec:main}. \Cref{prop:well-posed} studies the well-possedness of the proposed regression problem. Furthermore, we provide a formula for the gradient of the objective function for general Riemannian manifolds as well as for embedded submanifolds. In addition, \Cref{theorem: retraction tangent bundle} provides a way to construct retractions on the tangent bundle.
As a proof of concept, in \Cref{sec:numerics}, we apply the proposed framework to $(n-1)$-dimensional, $p$-norm spheres with the so-called retraction by normalization function and present numerical experiments. To conclude, we summarize our results in \Cref{sec:Conc}.

\section{Notation and terminology from differential geometry}
\label{sec:Basics}

In the following, we introduce notation and concepts from differential geometry required in this paper; for detailed definitions of the introduced objects, we refer to the literature \cite{lang1999, leeJeffrey2009, lee2012, lee2018, michor2008}.

\textbf{Vector bundles.}
A smooth\footnote{Throughout this paper, smooth means infinitely continuous differentiable.} vector bundle over a smooth manifold $M$ is a triple $(E, \pi, M)$, where $E$ is a smooth manifold  and $\pi\colon E\to M$ a projection. For each $p\in M$ we denote by $E_p\coloneqq \pi^{-1}(p)$ the fibre of $\pi$ over $p$. If  $E$ is a vector bundle,  $W$ a manifold and  $f\colon E\to W$, then we denote $f_p\coloneqq f\big|_{E_p}\colon E_p\to W$. If the map already has a subscript like $f_i$, we denote its fibre restriction by $(f_i)_p$. The set $\Gamma(E)$ consists of all smooth sections of the vector bundle $E$.  To emphasize the base point we will write an element $v\in E$ as a tuple $(p, v)$ or with a subscript $v_p$ with $p\in M$ and $v\in E_p$. If a vector bundle homomorphism $f\colon E\to F$ is given, then we denote $\Ker(f)\coloneqq \bigcup_{p\in M} \Ker(f_p)$, where $\Ker(f_p)\coloneqq \set{v\in E_p}{f_p(v) = 0_{T_{f(p)}N}}$. For a smooth manifold $M$ the tangent bundle is denoted by $TM$ and is itself a vector bundle $(TM, \pi_M, M)$, where we denote each fibre $(TM)_x$ by $T_xM$. Since $TM$ is a smooth manifold, we can consider the tangent bundle of $TM$ denoted by $(T^2M, \pi_{TM}, TM)$, where $T^2M = T(TM)$.

\textbf{Differential calculus on smooth manifolds.}
Let $f\colon M\to N$ be a smooth map between manifolds $M,N$, then the differential of $f$ is a vector bundle homomorphism denoted by $\textup{d}f\colon TM\to TN$. Given a suitable local frame in the domain and codomain, the local coordinate representation of $\textup{d}f$ is given by the Jacobian of $f$ denoted by $J_f$, where we suppress the frame dependency. A general curve $\gamma$ on $M$ is defined as a mapping from an open interval $I\subset\RR$ containing $0$ to $M$, i.e., $\gamma\colon I\to M$. Let $\gamma\colon I\to M$ be a smooth curve and $\partial t\in\Gamma(TI)$ the canonical basis vector field in $TI$, then we define $\dot{\gamma}(s) = (d\gamma\circ \partial t)(s)$ for $s\in I$.

\textbf{Horizontal and vertical subbundles.}
Given a linear connection $C\colon TM\times_M TM\to T^2M$ on $M$, where $\times_M$ denotes the fibre product over $M$, and the induced connector $K\colon T^2M\to TM$, the second tangent bundle can be decomposed into $T^2M\simeq HTM\oplus VTM$ according to \cite[p.~203]{michor2008}, where $VTM\coloneqq \Ker(\textup{d}\pi_{M})$ is called the vertical and $HTM\coloneqq \Ker(K)$ the horizontal subbundle. Therefore, each vector $\xi\in T^2M$ can be uniquely decomposed into a horizontal component $\xi^{\hor}\coloneqq \textup{d}\pi_M(\xi)\in TM$ and a vertical one $\xi^{\ver}\coloneqq K(\xi)\in TM$ at the point $(x, v)\in TM$.
So we can identify $\xi\in TM$ with the triple $(\pi_{TM}(\xi), \xi^{\hor}, \xi^{\ver})$. The vertical component corresponds to the change of $v$ within the fibre whereas the horizontal component corresponds to the change of the base point $x$. The linear connection tells us how we can identify certain tangent vectors if we only want to change the base point.

Let $N$ be another smooth manifold and $s\colon N\to TM$ a smooth mapping, the linear connection $C$ induces the covariant derivative of $s$ in the direction of $X\in\Gamma(TN)$ which is defined by $\nabla_X s\coloneqq K\circ \textup{d}s\circ X\colon N\to TN\to T^2M\to TM$. The covariant derivative can be extended on tensor bundles of any order; we denote the extension of the covariant derivative also by  $\nabla$. The covariant derivative reduces the information of the full differential $\textup{d}s$ only by a variation of the base point through $X$ and projecting onto the vertical change through $K$. With the help of the covariant derivative we can represent horizontal and vertical components of an element in $T^2M$ more specifically, which we  use in \cref{theorem: retraction tangent bundle} to proof the local rigidity condition of our constructed retraction on $T^2M$.
Therefore, if $\gamma\colon I\to TM$  is a smooth curve on $TM$, we can represent the curve $\gamma(t) = (u(t), V(t))$ with $u\colon I\to M$ as a smooth curve in $M$ and $V\colon I\to TM$ as a smooth vector field along $u$. 
If $t_0\in I$, then the horizontal component of $\dot{\gamma}(t_0)$ corresponds to
$
(\textup{d}\pi_M)_{\gamma(t_0)}(\dot{\gamma}(t_0))
= \textup{d}(\pi_M\circ \gamma))_{t_0}(\partial t(t_0))
= \textup{d}u_{t_0}(\partial t(t_0))
= \dot{u}(t_0)
$,
whereas the vertical change is given by
$
K_{\gamma(t_0)}(\dot{\gamma}(t_0))
= (K_{\gamma(t_0)}\circ \textup{d}\gamma \circ \partial t)(t_0)
= \nabla_{\partial t} V(t_0).
$
Thus, we can identify $\dot{\gamma}(t_0)$ with
\begin{equation}
	\label{eq: ver_hor_identification}
	(\pi_{TM}(\dot{\gamma}(t_0)), \dot{\gamma}(t_0)^{\hor}, \dot{\gamma}(s)^{\ver})
	= (\gamma(t_0), \textup{d}\pi_M(\dot{\gamma}(t_0)), K(\dot{\gamma}(t_0)))
	= (\gamma(t_0), \dot{u}(t_0), \nabla_{\partial t} V(t_0)).
\end{equation}

\textbf{Riemannian manifolds.}
We denote a Riemannian manifold by a tuple $(M, g)$ where $M$ is a smooth manifold and $g$ is the Riemannian metric. Throughout this paper, we consider a Riemannian manifold $(M, g)$ equipped with the Levi-Civita connection associated with $g$. According to \cite{sasaki1958}, the Riemannian metric $g$ on $M$ induces a Riemannian metric on the tangent bundle $TM$, the so-called Sasaki metric, which we denote by $g^S$. We will also need the concept of the exponential map, and its approximation, the so-called retraction. We denote the exponential map restricted at $x$ by $\exp_{x}\colon T_{x}M \to M,\,v\mapsto \exp_{x}(v)$, which assigns to every tangent vector $v\in T_xM$ the value $\tilde{\gamma}(1)$ of the geodesic $\tilde{\gamma}\colon [0,1]\to M$ satisfying $\tilde{\gamma}(0) = x$ and $\dot{\tilde{\gamma}}(0) = v.$ A retraction is a mapping $R\colon TM\to M$, where $R_x\colon T_x M\to M$ is the restriction of $R$ to $T_xM$ satisfying $R_x(0_x)=x$ and the local rigidity condition $(\textup{d}R_x)_{(x, 0)}=\text{id}_{T_xM}$. Here, we use the identification $T_{(x, 0)}(T_xM)\simeq T_xM$. In other words, a retraction is a local approximation of the exponential map. In general, we will identify the tangent space of a general vector space with itself.

\textbf{Embedded submanifold theory.}
For convenience of the reader, we recall some theoretical results regarding submanifolds to prepare the reader to \cref{sec:numerics}, in which we investigate the behaviour of our proposed framework on Riemannian submanifolds of an Euclidean space $\mathcal{E}$. First, we denote the scalar product and therefore the Riemannian metric of $\mathcal{E}$ just by $\left<\cdot, \cdot\right>$ and the covariant derivative induced by the Levi-Civita connection by $\nabla^{\eucl}$. If $M\subset \mathcal{E}$ is a smooth embedded submanifold of $\mathcal{E}$, meaning that it exists a smooth embedding $\iota\colon M\to \mathcal{E}$, then we consider each tangent space $T_xM\subset T_x\mathcal{E}\simeq \mathcal{E}$ as a subspace. For $M$ to be a Riemannian submanifold of $\mathcal{E}$, $M$ has to be additionally equipped with the Riemannian metric defined as the pullback of the Euclidean and denoted by $\iota^*\left<\cdot, \cdot \right>$. Therefore, we can use the differential operators from $\mathcal{E}$ to obtain derivatives of functions defined on $M$ like gradients of smooth functions or covariant derivatives of tensor fields on $M$. To this end, we will assume smooth extensions of these maps to an open environment of $\mathcal{E}$ including $M$. The existence of such smooth extensions is guaranteed by \cite[Lem.~5.34, p.~115]{lee2012} which we denote by the same symbol for simplicity. According to \cite[Eq.~(5. 4), p.~87]{boumal2023}, \cite[Thm.~5.9, p.~92]{boumal2023} and \cite[Prop.~3.61, p.~44]{boumal2023} the covariant derivative of the Levi-Civita connection of $M$ and the gradient are given by
\begin{align}
	\nabla_u V &= P_x(\nabla_u^{\eucl}(V)),
	\quad
	V\in\Gamma(TM), u\in T_xM,
	\label{eq: embedded covariant derivative}\\
	\grad(f) &= P_x(\grad^{\eucl}(f)), 
	\quad
	f\in C^{\infty}(M, \RR),
	\label{eq: general embedded gradient}
\end{align}
where $P_x\colon \mathcal{E}\to T_xM$ is the orthogonal projection. Throughout the paper, the symbol $P$ is reserved for an orthogonal projection. 

Additionally, according to \cite[Prop.~5.16, p.~106]{lee2012} every smooth embedded submanifold $M\subset\mathcal{E}$ can  be represented locally as a regular level set of a local defining function $\phi\colon U\subset\mathcal{E}\to \RR^{k}$. In this context, the tangent spaces at $x\in M\cap U$ can be expressed by $T_xM = \Ker(\textup{d}\phi_x)\subset\mathcal{E}$; see \cite[Prop.~5.38, p.~117]{lee2012}. It is also possible to specify the orthogonal projection $P_x\colon\mathcal{E}\to T_xM$ regarding the standard coordinate frame of $\mathcal{E}$ by 
\begin{align}
	\label{map: coordinate projection}
	P_x(v) = \left(E_n - J_{\phi}(x)^{\top}(J_{\phi}(x)J_{\phi}(x)^{\top})^{-1}J_{\phi}(x)\right)\cdot v
\end{align}
 for $v\in \mathcal{E}$, where $E_n$ is the $n$-dimensional identity matrix; see \cite[Eq.~(5.13.3), p.~430]{meyer2023}.

\section{Retraction-induced regression framework} 
\label{sec:main}

Let $(M, g)$ be a Riemannian manifold and $\mathcal{D}\coloneqq \left\{(t_1, y_1), \ldots (t_m, y_m)\right\}\subset \RR\times M$ be a finite collection of points call data set. The regression problem can be formulated as the search after a curve $c\colon \RR\to M$ out of a predefined set of curves $\mathcal{C}$ which fits the data the best regarding a cost function $\mathcal{F}\colon \mathcal{C}\to[0, \infty)$. Therefore, the goal is to solve the minimization problem $\min\limits_{c\in\mathcal{C}} \mathcal{F}(c)$. In this paper, we refer to $\mathcal{C}$ as the model restriction and $c\in\mathcal{C}$ as a model.

The geodesic regression problem formulated in \cite{fletcher2013} considers as $\mathcal{C}$ the set containing every geodesic in $M$ characterized by its starting conditions which are elements of the tangent bundle $TM$. In other words, the model is parametrized by an element of the tangent bundle, which is specified by $c_{\exp, (x, v)}\colon \RR \subset I \to M,\ t\mapsto \exp(x, tv)$ for $(x, v)\in TM$. The cost function is thus given by the manifold formulated version of the sum of squared residuals 
\begin{equation}
\label{eq:sum_squares}
	\mathcal{F}_{\exp}\colon TM\to [0, \infty),\
	(x, v)\mapsto \frac{1}{2} \sum\limits_{i = 1}^m \dist_{y_i}\left(c_{\exp, (x, v)}(t_i)\right)^2,
\end{equation}
where $\dist(y,x)$ denotes the Riemannian distance on $M$ between $x,y$ and we consider the mapping $\dist_{y}\coloneqq \dist(y, \cdot)\colon M\to [0, \infty)$. 
Under these considerations, the geodesic regression problem is then defined as
\begin{equation}
\label{eq:regression_main}
\min_{(x,v)\in TM} \mathcal{F}_{\exp}(x,v).
\end{equation}
Notice that both, the model and the Riemannian distance rely on the exponential map.

Our goal is to define a function like $\mathcal{F}$ from \eqref{eq:sum_squares} which does not rely on the exponential map, its inverse or the Riemannian distance. In other words, the model is given by $c_{R(x, v)}\colon \RR\supset I\to M,\ t\mapsto R(x, tv)$, where $R\colon TM\to M$ is a retraction as defined in \cref{sec:Basics}. Associated with the retraction, we consider the retraction-based distance $\dist^R$ (see e.g.~\cite{bergmann2025}) given by:
\begin{align*}
		\dist_x^R\colon D_x\to [0, \infty),\ 
		y\mapsto \sqrt{g_x\left(R_x^{-1}(y), R_x^{-1}(y))\right)}.
\end{align*}
We recall that since $\left(\textup{d}R_x\right)_{(0, x)}$ is surjective for every $x\in M$, the inverse function theorem implies that there exist open sets $U_x\subset T_xM$ and $D_x\subset M$ so that $R_x\colon U_x\to D_x$ is invertible, and thus $\dist_x^R$ is well defined for each $x\in M$. Since the equation $\dist(x, y) = \sqrt{g_x\left(\exp_x^{-1}(y), \exp_x^{-1}(y)\right)}$ holds  for every $(x, y)\in M\times W_x$, where $W_x$ is the complement of the cut locus of $x$, the retraction-based distance locally approximates the Riemannian distance.

Now, we are able to present the proposed objective function associated to the regression problem given in~\eqref{eq:regression_main}.
\begin{proposition}
	\label{prop:well-posed}
	Let $(M, g)$ be a Riemannian manifold and $\mathcal{D} = \left\{(t_1, y_1), \ldots, (t_m, y_m)\right\}\subset \RR\times M$ be a finite data set.  Furthermore, let $R\colon TM\to M$ be a retraction and $l_t\colon TM\to TM,\  (x, v)\mapsto (x, tv)$ a vector bundle homomorphism for each $t\in\RR$. We define $V_{y_i}\coloneqq (R\circ l_{t_i})^{-1}(D_{y_i})\subset TM$ and $V\coloneqq \bigcap\limits_{i=1}^m V_{y_i}$. The sets $D_{y_i}$ are chosen such that $R_{y_i} \colon U_{y_i} \to D_{y_i}$ is invertible.
	Then, the mapping defined by 
	\begin{align}
		\label{map: objective function}
		\mathcal{F}_R\colon V\to [0, \infty),\
		(x, v)\mapsto \frac{1}{2} \sum\limits_{i=1}^m \dist_{y_i}^R\left(c_{R, (x, v)}(t_i)\right)^2
	\end{align}
	is well-posed. 
\end{proposition}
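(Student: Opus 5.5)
The plan is to read ``well-posed'' as saying that $\mathcal{F}_R$ is a well-defined map on $V$ with values in $[0,\infty)$, and that $V$ is an open subset of $TM$; if the intended meaning also includes the existence of minimizers, I expect that part not to follow in general (see below). I would proceed in three steps.

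First, I check that each summand makes sense for $(x,v)\in V$. By definition of the model, $c_{R,(x,v)}(t_i) = R(x,t_iv) = (R\circ l_{t_i})(x,v)$. Since $(x,v)\in V\subset V_{y_i} = (R\circ l_{t_i})^{-1}(D_{y_i})$, this point lies in $D_{y_i}$. On $D_{y_i}$ the inverse $R_{y_i}^{-1}\colon D_{y_i}\to U_{y_i}\subset T_{y_i}M$ exists by the choice of $D_{y_i}$, which in turn comes from the inverse function theorem discussion preceding the proposition. Hence $\dist^R_{y_i}(c_{R,(x,v)}(t_i)) = \sqrt{g_{y_i}(w_i,w_i)}$ with $w_i = R_{y_i}^{-1}(c_{R,(x,v)}(t_i))$ is a well-defined real number. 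It is nonnegative because $g_{y_i}$ is positive definite. Summing finitely many squares and multiplying by $\tfrac12$ gives a value in $[0,\infty)$, so $\mathcal{F}_R$ is a well-defined map into $[0,\infty)$.

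Second, I address the domain. The map $l_t$ is smooth, being fibrewise scalar multiplication on the vector bundle, and $R$ is smooth, so $R\circ l_{t_i}\colon TM\to M$ is continuous. Since $D_{y_i}$ is open, the preimage $V_{y_i}$ is open in $TM$. As a finite intersection of open sets, $V$ is open. It is also nonempty: if some $y_i$ is used as base point, the zero section is a candidate, and more generally the point $(x,0_x)$ satisfies $R(x,0)=x$, so $(x,0)\in V$ whenever $x\in\bigcap_i D_{y_i}$. I would flag that nonemptiness really requires $\bigcap_i D_{y_i}\neq\emptyset$, for instance if the data lie in a common neighbourhood, and note this as a mild implicit assumption. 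As a bonus, $\mathcal{F}_R$ is smooth on $V$, since $R_{y_i}^{-1}$ is smooth by the inverse function theorem and $g$ is smooth. This regularity is what makes the gradient-based method of the paper meaningful.

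The main obstacle is that ``well-posed'' might be intended to include existence of a minimizer. Here $V$ is open and generally not compact, so existence does not follow without extra coercivity assumptions. I would therefore state the result as well-definedness, smoothness, and openness of the domain. If more is required, I would try to restrict to closed sublevel sets contained in $V$, arguing that these are compact when $M$ is compact and $U_{y_i}$ is bounded, but I expect this to need additional hypotheses.
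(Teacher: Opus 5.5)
Your proof is correct, and its essential step is exactly the paper's argument: for $(x,v)\in V\subset V_{y_i}$ one has $c_{R,(x,v)}(t_i)=R(l_{t_i}(x,v))\in D_{y_i}$, where $R_{y_i}^{-1}$ and hence $\dist^R_{y_i}$ are defined; your extra observations that $V$ is open and $\mathcal{F}_R$ is smooth are also correct (the paper uses openness later without proving it). One caveat on your aside: $\bigcap_i D_{y_i}\neq\emptyset$ is sufficient but not necessary for $V\neq\emptyset$, because $t\mapsto R(x,tv)$ only has to lie in $D_{y_i}$ at $t=t_i$ (e.g.\ on $S^2$ with the normalization retraction, $y_1=e_1$ and $y_2=-e_1$ have disjoint hemispheres $D_{y_i}$, yet $(x,v)=(e_2,-e_1)$ with $t_1=-1$, $t_2=1$ lies in $V$).
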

\begin{proof}
	Since the retraction-based distance is just defined locally, we have to ensure that the model output $c_{R, (x, v)}(t_i)$ lies in $D_{y_i}$ for each $(x, v)\in V$ and $i=1, \ldots, m$. Therefore, let $i\in\{1, \ldots, m\}$ and $(x, v)\in V$ be arbitrary. According to the definition of $V$ we have especially that $(x,v)\in V_{y_i}$. Thus, the equation $c_{R, (x, v)}(t_i) = R(x, t_iv) = R\left(l_{t_i}(x, v)\right)\in D_{y_i}$ holds for every $i= 1, \ldots, m$ which concludes the proof. 
\end{proof}
The retraction-based regression problem is then defined through problem~\eqref{eq:regression_main} with $\mathcal{F} = \mathcal{F}_R$, where the optimization domain is given by $V\subset TM$.

As already noted in \cite{fletcher2013}, this problem does not yield an analytic solution, even in the geodesic case; and thus, to find a local minimum of $\mathcal{F}_R$, we use the Riemannian steepest descent method. To this end we compute the gradient of $\mathcal{F}_R$. Relying on the linearity of the derivatives, we compute the gradient of only one summand of $\mathcal{F}_R$. Let $i\in\{1, \ldots, n\}$ be fixed but arbitrary, we consider the functions
\begin{align}
	\label{map: auxillary maps}
	f_i & \colon V_{y_i}\to [0, \infty),  \, (x,v) \mapsto \left(\dist_{y_i}^{R}\circ R\circ l_{t_i}(x,v)\right)^2,\\ 
	\nonumber
	h_i& \colon V_{y_i}\to T_{y_i}M, \, (x,v) \mapsto R_{y_i}^{-1}\circ R\circ l_{t_i}(x,v),
\end{align}
which leads to $\dist_{y_i}^{R}\left(c_{R,(x, v)}(t_i)\right)^2 = f_i(x, v) = g_{y_i}\left(h_i(x, v), h_i(x, v)\right)$ for every $(x, v)\in V_{y_i}$. 
Since each $V_{y_i}$ is an open subset of $TM$, we have $T_{(x, v)}V_{y_i}\simeq T_{(x, v)}TM$.
For $\delta\in T_{(x, v)}V_{y_i}\simeq T_{(x, v)}TM$ we have
\begin{equation}
	\label{eq: gradient}
	(\textup{d}f_i)_{(x, v)}(\delta)
	= 2 g_{y_i}\left(\textup{d}h_i)_{(x, v)}(\delta), h_i(x, v)\right) 
	= g^S_{(x, v)}\left(\delta, 2(\textup{d}h_i)_{(x, v)}^*\left(h_i(x, v)\right)\right),
\end{equation}
where $(\textup{d}h_i)_{(x, v)}^*\colon T_{y_i}M\to T_{(x, v)}TM$ is the adjoint map of $(\textup{d}h_i)_{(x, v)}\colon T_{(x, v)}TM\to T_{y_i}M$. For the codomain of the adjoint map we used the identification $T(T_{y_i}M)\simeq T_{y_i}M$. The second equation in (\ref{eq: gradient}) gives $\grad f_i(x, v) = (\textup{d}h_i)_{(x, v)}^*(h_i(x, v))$ for every $(x, v)\in V_{y_i}$. This allows us to conclude that the gradient of the cost function is given by 
\begin{equation}
	\label{eq: gradient objective fct}
	\grad \mathcal{F}_R (x, v)
	= \sum\limits_{i = 1}^m (\textup{d}h_i)_{(x, v)}^*(h_i(x, v))\quad , \, (x, v)\in V.
\end{equation}
The formula for the gradient of the objective function given in \eqref{eq: gradient objective fct} coincides with that one presented in \cite{fletcher2013} which becomes clear when the identity $(\textup{d}\exp^{-1}_y)_z^*\left(\exp^{-1}_y(z)\right) = - \exp^{-1}_z(y)$ is used.

\paragraph{Embedded submanifolds in $\RR^n$.}
If the manifold $M$ is a Riemannian submanifold of $\RR^n$, the tangent bundle $TM$ is a submanifold of $\RR^n\times \RR^n$ since every local defining function $\phi\colon \RR^n\supset U\to \RR^k$ of $M$ induces a local defining function $\psi\colon U\times\RR^n\to \RR^{2k},\ (x, v)\mapsto (\phi(x), \textup{d}\phi_x(v))$ of $TM$; see, e.g., \cite[Thm~3.43, p.~37-38]{boumal2023}.
Then, we can represent the gradient of $\mathcal{F}_R$  in standard coordinates of $\RR^n\times \RR^n$ thanks to (\ref{map: coordinate projection}):
\begin{align}
	&\grad \mathcal{F}_R(x, v)\nonumber\\
	&= P_{(x, v)}\left(\grad^{\eucl}\mathcal{F}_R(x, v)\right) \nonumber \\
	&= \sum\limits_{i=1}^n \left(E_n - J_{\psi}(x, v)^{\top}\left(J_{\psi}(x, v)J_{\psi}(x, v)^{\top}\right)^{-1}J_{\psi}(x, v)\right)\left(J_{h_i}(x, v)^{\top} h_i(x, v)\right).
	\label{eq: embedded gradient objective fct}
\end{align}
The derivative of $\psi$ at $(x, v)\in T(U\times\RR^n)$ in the direction of $\xi\in T_{(x, v)}T(U\times \RR^n)$ and its corresponding Jacobian is  given by
\begin{align*}
	\textup{d}\psi_{(x, v)}(\xi)
	= \begin{pmatrix}
		\textup{d}\phi_x(\xi^{\hor}) \\
		\left(\nabla^{\eucl}_{\xi^{\hor}} \textup{d}\phi\right)_x(v) + \textup{d}\phi_x\left(\xi^{\ver}\right)
	\end{pmatrix},
	\quad
	J_{\psi}(\hat{x}, \hat{v})
	= \begin{pmatrix}
		J_{\phi}(\hat{x}) & 0 \\
		\hat{v}\cdot A_{\phi}(\hat{x})& J_{\phi}(\hat{x})
	\end{pmatrix}
	\cdot \begin{pmatrix}
		\hat{\xi}^{\hor} \\
		\hat{\xi}^{\ver}	
	\end{pmatrix},
\end{align*}
where we use the hat accent to indicate the corresponding coordinate representation and denote by $A_{\phi}(\hat{x})$ the matrix representation of $\left(\nabla^{\eucl}_{\xi^{\hor}} \textup{d}\phi\right)_x\colon T_xU\to \RR^k$ in standard coordinates.

To move on the manifold $V\subset TM$ in the direction $\delta\in T^2M$ we must consider a retraction $\mathcal{R}\colon T^2M\to TM$ on the second tangent vector bundle given by the following theorem.
\begin{theorem}	
	\label{theorem: retraction tangent bundle}
	Let $M$ be a smooth manifold and $R\colon TM\to M$ a retraction.
	Moreover, let $\nabla$ and $K$ the covariant derivative and connector of a predefined connection on $M$, respectively. Furthermore, let $\mathcal{T}\colon TM\oplus TM\to TM$ be a vector transport\footnote{We refer the reader to \cite{absil2008} for the exact definition and further details.} on $M$ associated to $R$. Then, the map $\mathcal{R}\colon T^2M\to TM$ given by 
	\begin{align*}
		\mathcal{R}_{(x, v)}(\xi)
		= \left(R_x(\xi^{\hor}), \mathcal{T}_{\xi^{\hor}}(v + \xi^{\ver})\right)
		\in T_{R_x(\xi^{\hor})}M
	\end{align*}
	for $(x, v)\in TM$ and $\xi\in T_{(x, v)}TM$ is a retraction of $T^2M$.
\end{theorem}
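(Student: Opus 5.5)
To show that $\mathcal{R}$ is a retraction on $TM$ (with $T^2M=T(TM)$ as its tangent bundle), we check three things: (i) $\mathcal{R}$ is smooth, (ii) $\mathcal{R}_{(x,v)}(0_{(x,v)})=(x,v)$, and (iii) the local rigidity condition $(\textup{d}\mathcal{R}_{(x,v)})_{0}=\id_{T_{(x,v)}TM}$. We use the identification $T_0(T_{(x,v)}TM)\simeq T_{(x,v)}TM$ and the horizontal/vertical splitting $\xi\simeq((x,v),\xi^{\hor},\xi^{\ver})$ from \eqref{eq: ver_hor_identification}.

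Smoothness follows because $\xi\mapsto\xi^{\hor}=\textup{d}\pi_M(\xi)$ and $\xi\mapsto\xi^{\ver}=K(\xi)$ are smooth vector bundle homomorphisms, $R$ is smooth, and the vector transport $\mathcal{T}$ is smooth by definition.

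For (ii), take $\xi=0$. Then $\xi^{\hor}=0_x$ and $\xi^{\ver}=0_x$, so $R_x(0_x)=x$. The defining property of a vector transport, $\mathcal{T}_{0_x}=\id_{T_xM}$, gives $\mathcal{T}_{0_x}(v)=v$, hence $\mathcal{R}_{(x,v)}(0)=(x,v)$.

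For (iii), fix $\xi\in T_{(x,v)}TM$ and consider the curve $\gamma(t)=\mathcal{R}_{(x,v)}(t\xi)=(u(t),V(t))$ with $u(t)=R_x(t\xi^{\hor})$ and $V(t)=\mathcal{T}_{t\xi^{\hor}}(v+t\xi^{\ver})$, a vector field along $u$. Linearity of $\textup{d}\pi_M$ and $K$ on fibres was used here to write $(t\xi)^{\hor}=t\xi^{\hor}$ and $(t\xi)^{\ver}=t\xi^{\ver}$. By \eqref{eq: ver_hor_identification} we have $\dot\gamma(0)\simeq((x,v),\dot u(0),\nabla_{\partial t}V(0))$, so it suffices to show $\dot u(0)=\xi^{\hor}$ and $\nabla_{\partial t}V(0)=\xi^{\ver}$.

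The horizontal part is immediate from the local rigidity of $R$: $\dot u(0)=(\textup{d}R_x)_{0_x}(\xi^{\hor})=\xi^{\hor}$.

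For the vertical part, linearity of $\mathcal{T}_{t\xi^{\hor}}$ gives $V(t)=\mathcal{T}_{t\xi^{\hor}}(v)+t\,\mathcal{T}_{t\xi^{\hor}}(\xi^{\ver})$. The covariant derivative of the second term at $t=0$ is $\mathcal{T}_{0}(\xi^{\ver})=\xi^{\ver}$ by the product rule, since the factor $t$ vanishes at $0$. It therefore remains to show
\begin{equation*}
\nabla_{\partial t}\,\mathcal{T}_{t\xi^{\hor}}(v)\big|_{t=0}=0 .
\end{equation*}

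This is the main obstacle. Absil's definition of a vector transport only requires smoothness, consistency ($\mathcal{T}_{\eta}$ maps into $T_{R(\eta)}M$), and $\mathcal{T}_{0_x}=\id$. None of these by itself forces the vector field to be parallel to first order. I would try to exploit that the identity is linear in $v$ and that $\mathcal{T}_{t\xi^{\hor}}(v)$ agrees with the parallel transport $P_{0\to t}v$ along $u$ up to $O(t^2)$. This holds, for instance, for the differentiated retraction and for projection-based transports on submanifolds, where it can be checked via \eqref{eq: embedded covariant derivative}.

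If the general case fails, I would state the additional first-order condition $\frac{D}{dt}\mathcal{T}_{t\eta}(v)|_{t=0}=0$ explicitly as the requirement on $\mathcal{T}$ "associated to $R$". This condition is satisfied by parallel transport along $t\mapsto R_x(t\eta)$. With it, the vertical component equals $\xi^{\ver}$, so $\dot\gamma(0)=\xi$, which proves local rigidity and hence the theorem.
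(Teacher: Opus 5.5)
Your reduction is correct and follows the same route as the paper. $R_x(0_x)=x$ and $\mathcal{T}_{0_x}=\id$ give the zero-section condition. Rigidity of $R$ gives the horizontal component. Linearity of $\mathcal{T}_{t\xi^{\hor}}$ and the product rule leave the vertical component equal to $\xi^{\ver}+\nabla_{\partial t}\mathcal{T}_{t\xi^{\hor}}(v)|_{t=0}$.

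The obstacle you isolate is genuine, and no cleverer argument removes it, because the statement is false for a general vector transport. On $M=\RR$ with the flat connection, take $R_x(\eta)=x+\eta+\eta^2$ and its differentiated transport $\mathcal{T}_\eta(\zeta)=(1+2\eta)\zeta$. Then $\mathcal{R}_{(x,v)}(\xi)=\bigl(x+\xi^{\hor}+(\xi^{\hor})^2,\,(1+2\xi^{\hor})(v+\xi^{\ver})\bigr)$. Its differential at $0$ maps $(\xi^{\hor},\xi^{\ver})$ to $(\xi^{\hor},\xi^{\ver}+2v\,\xi^{\hor})$, which is not the identity when $v\neq0$. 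The same happens for $R_x(\eta)=x+\eta$ with $\mathcal{T}_\eta(\zeta)=e^{\eta}\zeta$.

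So your fallback is mandatory, not optional. The hypothesis $\nabla_{\partial t}\mathcal{T}_{t\eta}(v)|_{t=0}=0$ for all $x$ and $\eta,v\in T_xM$ must be added, and with it your argument is a complete proof. The paper's proof passes over exactly this step. In its vertical computation it replaces $\mathcal{P}^{\gamma_{\xi}}_{t,0}(V(t))-V(0)$ by $\mathcal{P}^{\gamma_{\xi}}_{t,0}(V(t))$, silently dropping $V(0)=v$. It then asserts without argument that $\mathcal{P}^{\gamma_{\xi}}_{t,0}(\mathcal{T}_{t\xi^{\hor}}(v))/t\to 0_x$. As written, that quotient diverges when $v\neq0$; with $-v$ restored, its limit is precisely the term you could not control.

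One of your proposed examples needs correcting. Take the differentiated transport $\mathcal{T}_\eta=(\textup{d}R_x)_\eta$ and the Levi-Civita connection, and write $F(t,s)=R_x(t\eta+sv)$. Then $\nabla_{\partial t}\mathcal{T}_{t\eta}(v)|_{t=0}=\frac{D}{\partial t}\partial_sF(0,0)$. This is bilinear in $(\eta,v)$ (check in coordinates) and symmetric by the symmetry lemma. Its diagonal is the initial covariant acceleration of $t\mapsto R_x(t\eta)$. By polarization, your condition holds for this transport if and only if $R$ is a second-order retraction. That is true for normalization on $S_2^{n-1}$, which covers the paper's experiments. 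It is false for normalization on $S_p^{n-1}$, $p\neq2$, with the induced metric: there the acceleration is $-\frac{1}{p}\bigl(\eta^{\top}\Hess\phi(x)\,\eta\bigr)P_x(x)$, which is nonzero at generic $x$. By contrast, the projection transport $\mathcal{T}_\eta(\zeta)=P_{R_x(\eta)}(\zeta)$ on an embedded submanifold always satisfies your condition. Differentiating $P_{u(t)}^2=P_{u(t)}$ at $t=0$ gives $P_x\,\dot P\,P_x=0$, where $\dot P=\frac{\textup{d}}{\textup{d}t}P_{u(t)}|_{t=0}$, so $\nabla_{\partial t}P_{u(t)}(v)|_{t=0}=P_x\dot P v=0$ for $v\in T_xM$.
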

\begin{proof}
	First, we show that $\mathcal{R}_{(x, v)}(0_{(x, v)}) = (x, v)$ holds for all $(x, v)\in TM$: We have $0_{(x, v)}^{\hor} = 0_x = 0_{(x, v)}^{\ver}$ for all $(x, v)\in TM$ and, thus, we can conclude 
	\begin{align}
		\label{eq: 1_retr_cond}
		\mathcal{R}_{(x, v)}\left(0_{(x, v)}\right)
		= \left(R_x(0_x), \mathcal{T}_{0_x}(v + 0_x)\right)
		= \left(x, \id_{T_xM}(v)\right)
		= (x, v)\quad \forall \,(x, v)\in TM.
	\end{align}
	It remains  to be shown that $\mathcal{R}$ fulfills the local rigidity condition
	\begin{align*}
		\left(\textup{d}\mathcal{R}_{(x, v)}\right)_{0_{(x, v)}}
		= \id_{T_{(x, v)}TM}\quad \forall \,(x, v)\in TM.
	\end{align*}
	Let $\xi\in T_{(x, v)}TM$. 
	As we have already seen in \cref{sec:Basics}, it suffices if the vector $(\textup{d}\mathcal{R}_{(x, v)})_{0_{(x, v)}}(\xi)$ corresponds to the triple $((x, v), \xi^{\hor}, \xi^{\ver})$. 
	From~\eqref{eq: 1_retr_cond}, we know that the first component of the triple coincides with $\pi_{TM}((\d\mathcal{R}_{(x, v)})_{0_{(x, v)}}(\xi)) = \mathcal{R}_{(x, v)}\left(0_{(x, v)}\right)$ for all $(x, v)\in TM$. Now, we compute the horizontal and vertical changes by means of equation \eqref{eq: ver_hor_identification}. To this end, we consider $c\colon I\to T_{(x, v)}TM,\ t\mapsto t\xi$. Thus, $c$ is a smooth curve with $c(0) = 0_{(x, v)}$ and $\dot{c}(0) = \xi$. Now, we consider the $TM$ valued curve $\mathcal{R}_{(x, v)}\circ c$. Then,
	\begin{align*}
		(\mathcal{R}_{(x, v)}\circ c)(t)
		= \left(R_x(c(t)^{\hor}),\ \mathcal{T}_{c(t)^{\hor}}(v + c(t)^{\ver})\right)
		= \left(R_x(t \xi^{\hor}),\ \mathcal{T}_{t\xi^{\hor}}(v + t \xi^{\ver})\right)
	\end{align*}
	for $(x, v)\in TM$ and $t\in I$. The last equality holds thanks to the linearity of $\textup{d}\pi_M$ and $K$: 
	\begin{align*}
		&c(t)^{\hor} 
		= K(c(t)) 
		= K(t \xi) 
		= t K(\xi) 
		= t \xi^{\hor}, \\
		&c(t)^{\ver}
		= \textup{d}\pi_M(c(t))
		= \textup{d}\pi_M(t \xi)
		= t \textup{d}\pi_M(\xi)
		= t \xi^{\ver}.
	\end{align*}
	Now, we calculate the horizontal and vertical changes of $(\textup{d}\mathcal{R}_{(x, v)})_{0_{(x, v)}}(\xi) = \textup{d}(\mathcal{R}_{(x, v)}\circ c)_0(\partial t(0))$. The horizontal change is given by 
	\begin{align*}
		(\textup{d}\pi_M)_{(x, v)}\left(\textup{d}(\mathcal{R}_{(x, v)}\circ c)_0(\partial t(0))\right)
		&= \textup{d}(R_x\circ c_{\hor})_0(\partial t(0))
		= (\textup{d}R_x)_{(x, 0)}\left(\dot{c}_{\hor}(0)\right) \\
		&= \id_{T_xM}(\xi^{\hor})
		= \xi^{\hor},
	\end{align*}
	where $c_{\hor}$ is the smooth curve given by $c_{\hor}(t) = c(t)^{\hor} = t \xi^{\hor}$.
	Before we calculate the vertical change we note that the map $t\mapsto \mathcal{T}_{t\xi^{\hor}}(v + t \xi^{\ver}) \in T_{R_x(t\xi^{\hor})}M$ is a smooth vector field along $\gamma_{\xi}\colon I\to M,\
	t\mapsto R_x\left(t \xi^{\hor}\right)$ which we will denote by $V$ for simplicity. So we have 
	\begin{align*}
		K_{(x, v)}\left(\textup{d}(\mathcal{R}_{(x, v)}\circ \gamma)_0(\partial t(0))\right)
		&= \nabla_{\partial t} \left(\mathcal{T}_{t\xi^{\hor}}(v + t \xi^{\ver})\right)\bigg|_{t = 0} 
		= \nabla_{\partial t} V(0) \\
		&\overset{\text{\cite{lee2018}}}{=} \lim\limits_{t\to 0} \frac{\mathcal{P}_{t, 0}^{\gamma_{\xi}}(V(t)) - V(0)}{t} 
		= \lim\limits_{t\to 0} \frac{\mathcal{P}_{t, 0}^{\gamma_{\xi}}(\mathcal{T}_{t\xi^{\hor}}(v + t \xi^{\ver}))}{t} \\
		&= \lim\limits_{t\to 0} \frac{\mathcal{P}_{t, 0}^{\gamma_{\xi}}(\mathcal{T}_{t\xi^{\hor}}(v)) + t \mathcal{P}_{t, 0}^{\gamma_{\xi}}(\mathcal{T}_{t\xi^{\hor}}(\xi^{\ver}))}{t} \\
		&= \underbrace{\lim\limits_{t\to 0} \frac{\mathcal{P}_{t, 0}^{\gamma_{\xi}}(\mathcal{T}_{t\xi^{\hor}}(v))}{t}}_{\to 0_x}
		+ \lim\limits_{t\to 0}\mathcal{P}_{t, 0}^{\gamma_{\xi}}(\mathcal{T}_{t\xi^{\hor}}(\xi^{\ver})) \\
		&= \mathcal{P}_{0, 0}^{\gamma_{\xi}}(\mathcal{T}_{0_x}(\xi^{\ver})) 
		= \xi^{\ver},
	\end{align*}
	where $\mathcal{P}^{\gamma}_{t_1, t_2}\colon T_{\gamma(t_1)}M\to T_{\gamma(t_2)}M$ denotes the parallel transport along $\gamma$. Therefore, we can identify $\xi$ as well as the vector $(\textup{d}\mathcal{R}_{(x, v)})_{0_{(x, v)}}(\xi)$ with $((x, v), \xi^{\hor}, \xi^{\ver})$ which concludes the proof.
\end{proof}

\section{Retraction-based regression on the $p$-norm unit sphere}
\label{sec:numerics}

In this section, we apply the general framework developed in \cref{sec:main} for regression problems to the $p$-norm sphere. In the following, we derive specific expressions of \eqref{map: auxillary maps} for the $p$-norm sphere and specify the chosen retractions. By the end of this section, we will present three numerical experiments. For a more concise notation we introduce the Hadamard product by $\odot\colon\RR^n\times\RR^n\to \RR^n,\ (v, w)\mapsto (v_1\cdot w_1, \ldots, v_n\cdot w_n)$ as well as the sign function by $\sgn\colon\RR\to\{-1, 0, 1\}$. In addition, all functions of the type $f\colon\RR\to X\subset \RR$ are extended to $\RR^n$ by applying them component-wise, e.g., $f(x) = (f(x_1), \ldots f(x_n))$ for $x = (x_1,\dots,x_n)\in\RR^n$.

The $(n-1)$-dimensional $p$-norm unit sphere for $p\in(1, \infty)$ is defined by
\begin{align*}
	S_p^{n-1}
	\coloneqq\set{x\in\RR^n}{\norm{x}_p\coloneqq \sqrt[p]{\sum\limits_{i = 1}^n |x_i|^p} = 1}
	= \phi^{-1}\left(\{1\}\right),
\end{align*}
where $\phi\colon\RR^n\to \RR,\ x\mapsto \norm{x}_p^p$ and $\phi^{-1}(A)$ denotes the inverse image of $\phi$ of the set $A$. As stated in the proof \cite[Thm.~1]{sato2023} $\phi$ is a defining function for $S_p^{n-1}, p\in(1, \infty)$. Indeed, $S_p^{n-1}$ is a $(n - 1)$-dimensional $C^r$ embedded Riemannian submanifold of $\RR^n$. The regularity of this manifold, denoted by the index $r$, depends on whether $p$ is even, odd or not an integer. 

The the tangent space of $S_p^{n-1}$ at $x\in\phi^{-1}(\{1\})$ can be described by
\begin{align*}
	T_xS_p^{n-1}
	= \Ker(\textup{d}\phi_x) 
	= \set{y\in\RR^n}{\left<\grad\phi(x), y\right>  = 0} 
	= \set{y\in\RR^n}{\left<\sgn(x)\odot|x|^{p-1}, y\right>  = 0},
\end{align*}
where $\grad \phi(x) = p\cdot \sgn(x)\odot |x|^{p-1}$ (c.f., \cite[p.~900]{sato2023}). Therefore, the map $\psi\colon\RR^n\times\RR^n\to\RR^2,\ (x, v)\mapsto (\phi(x), \textup{d}\phi_x(v))$ is a defining function for the tangent bundle so that we have
\begin{align*}
	TS_p^{n-1}
	= \set{(x, v)\in \RR^n\times \RR^n}{\phi(x) = 1 \;\ \text{ and }\; \textup{d}\phi_x(v) = 0}
	= \psi^{-1}(\{(1, 0)\}).
\end{align*}
As shown in~\cite[Thm.~3.43, p.~37]{boumal2023} the tangent bundle is a $2(n - 1)$-dimensional $C^{r-1}$ embedded Riemannian submanifold of $\RR^n\times\RR^n$. It should be noted that $r - 1\geq 0$ and therefore well defined. 

There are several retractions defined on the sphere, but from now on we focus on the retraction by normalization. 
The definition of this retraction is not intrinsically as the vector space structure of the ambient space $\RR^n$ is used. 
The retraction is defined  by
\begin{align}
	\label{map: retr by normalization}
	R\colon TS_p^{n - 1}\to S_p^{n-1},\ (x, v)\mapsto \frac{x + v}{||x + v||_p} 
\end{align}
for $p\in(1, \infty)$.
According to \cite[Prop.~3]{sato2023} and \cite[Prop.~3.2, p.~8]{absil2012} this mapping fulfils both conditions for being a retraction on $S_p^{n-1}$.
Additionally, its point-wise inverse, computed in \cite[Prop.~4]{sato2023}, is given by
\begin{align*}
	R^{-1}_x\colon D_x\to T_xS_p^{n - 1},\
	y\mapsto p\frac{y}{\left<\grad\phi(x), y\right>} - x,
	\quad 
	x\in S_p^{n-1}.
\end{align*}
where the set $D_x = \set{y\in S_p^{n - 1}}{\left<\grad\phi(x), y\right> > 0}$ describes an open half-space of the sphere. To calculate the gradient of the objective function $\mathcal{F}_R$ given in~\eqref{map: objective function} we use the formula~\eqref{eq: embedded gradient objective fct}. For the following computations we adopt the notation of \eqref{map: auxillary maps} for $h_i$ and \eqref{eq: embedded gradient objective fct} for $\psi$. Thus, we have to compute explicit expressions for $J_{h_i}$ and $J_{\psi}$. Therefore, let $\mathcal{D} = \set{(t_i, y_i)\in \RR\times S_p^{n-1}}{i = 1, \ldots, m}$ be a predefined data set so that for each $i\in\{1, \ldots, m\}$ the map $h_i$, generally defined in \eqref{map: auxillary maps}, is given by
\begin{align*}
	&h_i\colon V_{y_i}\to T_{y_i}S_p^{n-1},\ 
	(x, v)\mapsto R_{y_i}^{-1}(R_x(t_iv))
	= p\frac{x + t_iv}{\left<\grad\phi(y_i), x + t_iv\right>} - y_i,
\end{align*}
where $V_{y_i} = \set{(x, v)\in TM}{\left<\grad\phi(y_i), x + t_iv\right> > 0}$. From~\cref{prop:well-posed}, we recall that the optimization domain $V$ takes the form $V = \bigcap_{i= 1, \ldots, m} V_{y_i}$.
Since the codomain of $\phi$ is $\RR$, the matrix $A_{\phi}(x)$ is just the Hessian of $\phi$ given by $\Hess\phi(x) = p(p-1) \diag(|x_i|^{p-2})$ in $x\in\RR^n$. Thus, by using the standard coordinate frame $(\partial_{x_i}, \partial_{v_i})$ we get for each $(x, v)\in V$
\begin{align*}
	J_{\psi}(x, v)
	= \begin{pmatrix}
		\grad\phi(x)^{\top} & 0 \\
		v\cdot \Hess\phi(x) & \grad\phi(x)^{\top}
	\end{pmatrix}
	= p \begin{pmatrix}
		\sgn(x)\odot |x|^{p-1} & 0 \\
		(p-1) v\odot |x|^{p-2} & \sgn(x)\odot |x|^{p-1}
	\end{pmatrix}.
\end{align*}
Additionally, for $(x,v)\in V$ it holds
\begin{align*}
	\partial_{x_j}g_i(x, v)
	&= p \frac{\left<\grad\phi(y_i), x + t_iv\right> e_j - \left<\grad\phi(y_i), e_j\right> (x + t_iv)}{\left<\grad\phi(y_i), x + t_iv\right>^2}, \\
	\partial_{v_j}g_i(x, v)
	&= t_i \partial_{x_j}g_i(x, v),
\end{align*}
where $e_j$ denotes the $j$-th standard basis vector in $\RR^n$.

As  previously mentioned, the use of the Riemannian gradient descent algorithm to solve the regression problem~\eqref{eq:regression_main}, we must specify a retraction on $T^2S_p^{n-1}$, that is a mapping $\mathcal{R}\colon T^2S_p^{n-1}\to TS_p^{n-1}$. In virtue of \cref{theorem: retraction tangent bundle} it is sufficient to specify a vector transport. In our numerical experiments, we use the vector transport defined by differentiating the retraction $R$:
\begin{align}
	\label{map: vector transport}
	\mathcal{T}\colon TS_p^{n-1}\times_{S_p^{n-1}} TS_p^{n-1} \to TS_p^{n-1}, \
	(\xi_x, \eta_x)\mapsto \mathcal{T}_{\eta_x}(\xi_x)
	\coloneqq 
	(dR_x)_{\eta_x}(\xi_x).
\end{align}
Here, we can provide an explicit formula for \eqref{map: vector transport} since
\begin{align*}
	(dR_x)_{\eta_x}(\xi_x) = \frac{1}{\norm{x+\eta_x}_p}\left(\xi_x - \left<\grad \norm{\cdot}_p (x + \eta_x), \xi_x\right> R_x(\eta_x)\right),
\end{align*}
which degenerates to $\frac{1}{\norm{x + \eta_x}}P_{R_x(\eta_x)}(\xi_x)$ for $p=2$, where $P_{R_x(\eta_x)}$ denotes the orthogonal projection onto $T_{R_x(\eta_x)}S_2^{n-1}$ and $\norm{\cdot}$ the Euclidean norm, i.e., for $p=2$ \eqref{map: vector transport} reduces to
\begin{align}
	\label{map: vector transport S_2}
	\mathcal{T}\colon TS_2^{n-1}\times_{S_2^{n-1}} TS_2^{n-1} \to TS_2^{n-1}, \
	(\xi_x, \eta_x)\mapsto \mathcal{T}_{\eta_x}(\xi_x)=\frac{1}{\norm{x + \eta_x}}P_{R_x(\eta_x)}(\xi_x).
\end{align}
See \cite[Ex.~8.1.4, p.~172]{absil2008} for a proof that \eqref{map: vector transport} is indeed a vector transport.

\paragraph{Numerical experiments.}

The numerical experiments presented in this section are aimed to analyze the viability of the proposed framework as a proof of concept. As we aim to test whether the optimization algorithm converges to a solution of the problem, we generate synthetic data from a known solution. Additionally, to improve the results visualization, we restrict the problem to the two-dimensional $2$-sphere. However, some of the incoming constructions are kept general for $S_p^{n-1}$. All numerical experiments were implemented in Python using the package Pymanopt \cite{townsend2016}. From all the available optimization algorithms, we employ the Riemannian gradient descent combined with an Armijo backtracking line search strategy.

The independent variable samples $t_1, \ldots, t_m$ are generated by dividing a predefined interval $[a, b]\subset \RR$ into $m-1$ equal parts. The corresponding manifold-valued observations are obtained by evaluating our model $c_{R, (x^*, v^*)}$ for fixed parameter values $(x^*, v^*)\in TS_p^{n-1}$ on each time point and adding Gaussian perturbations to the resulting outputs. Therefore, the $i$-th data point on the sphere takes the form $y_i = R\left(\hat{y}_i, \varepsilon_i\right)$, where $\varepsilon_i = P_{\hat{y}_i}(z_i)$ denotes a Gaussian perturbation taking values in the tangent space at the model prediction  $\hat{y}_i\coloneqq c_{R, (x^*, v^*)}(t_i)\in S_p^{n-1}$. Here, $z_i$ is the $i$-th observation of the standard Gaussian random vector $Z \sim \mathcal{N}(0_{\RR^n}, \mu E_n)$ in $\RR^n$. The choice of $\mu$ requires particular care, since excessively large perturbations may lead to an empty feasible optimization domain due to the restricted domain of the inverse retraction. The specific values of $x^*$ and $v^*$ for generating our data is specified in \cref{tab:results}.

We present the obtained solutions $(x,v)\in TS_p^{n-1}$ and their corresponding trajectories together with the associated cost functional and gradient norm histories for three sets of synthetic data on the two-dimensional $2$-sphere, which we denote by $S^2 = S_2^2$ for simplicity. The experiments differ solely in the variance $\mu\geq 0$ of the perturbing noise. \cref{tab:parameters} shows the value of the parameters used throughout all experiments. 
\begin{table}[h]
	\centering
	\begin{tabular}{|c|c|}
		\hline
		data set size &  $100$\\
		\hline
		random seed & $5$ \\ 
		\hline
		time interval & $(-3, 3)$\\
		\hline
		retraction & by normalization (\ref{map: retr by normalization}) \\
		\hline
		vector transport & by differentiated retraction (\ref{map: vector transport S_2})\\
		\hline
		$x^*$ & $(-0.51, -0.85, -0.16)$\\
		\hline
		$v^*$ & $(-0.78, 0.53, -0.33)$ \\
		\hline
	\end{tabular}
	\caption{Parameter used in all three numerical experiments where $(x^*, v^*)$ is rounded to two decimal places.}
	\label{tab:parameters}
\end{table}
The final results as well as the real solutions are presented in \cref{tab:results}.

In all three cases, the optimization process stopped after less than $30$ iterations satisfying that the gradient norm was below $10^{-6}$. The plot of the gradient norm suggests that the convergence rate is linear agreeing with the theory. The obtained solutions are very close to the real one differing only after the second decimal place.

\begin{table}[h]
	\centering
	\begin{tabular}{|c|c|c|c|}
		\hline
		& \multicolumn{3}{c|}{Noise Level $\mu$}\\
		\hline
		& $0$ & $0.08$ & $0.205$\\
		\hline
		$x$ & $(-0.51, -0.85, -0.16)$ & $(-0.51, -0.84, -0.16)$ & $(-0.52, -0.84, -0.16)$\\
		\hline
		$v$ & $(-0.78, 0.53, -0.33)$ & $(-0.78, 0.54, -0.31)$ & $(-0.79, 0.54, -0.29)$\\
		\hline
		iterations & $29$ & $27$ & $29$\\
		\hline
		cost & $5.46\cdot 10^{-15}$ & $0.63$ & $4.11$\\
		\hline
		gradient norm & $6.33\cdot 10^{-7}$ & $4.44\cdot 10^{-7}$ & $3.89\cdot 10^{-7}$\\
		\hline
	\end{tabular}
	\caption{Final results of all three numerical experiments which are rounded to two decimal places.}
	\label{tab:results}
\end{table}

\begin{figure}[ht]
	\centering
	\begin{subfigure}{0.32\textwidth}
		\centering
		\includegraphics[width=\linewidth]{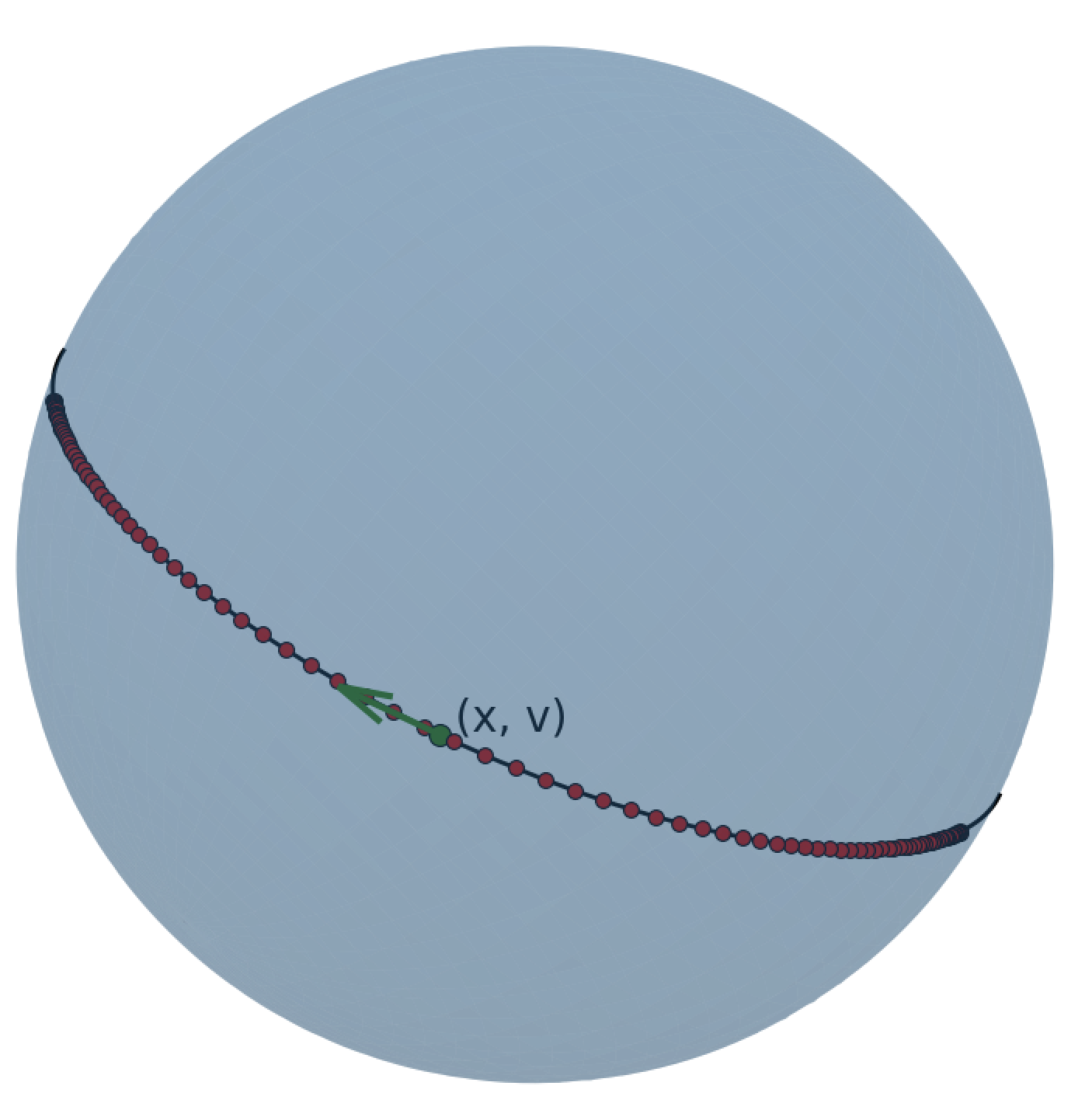}
		\label{fig:reg_line_no_noise}
	\end{subfigure}
	\hfill
	\begin{subfigure}{0.32\textwidth}
		\centering
		\includegraphics[width=\linewidth]{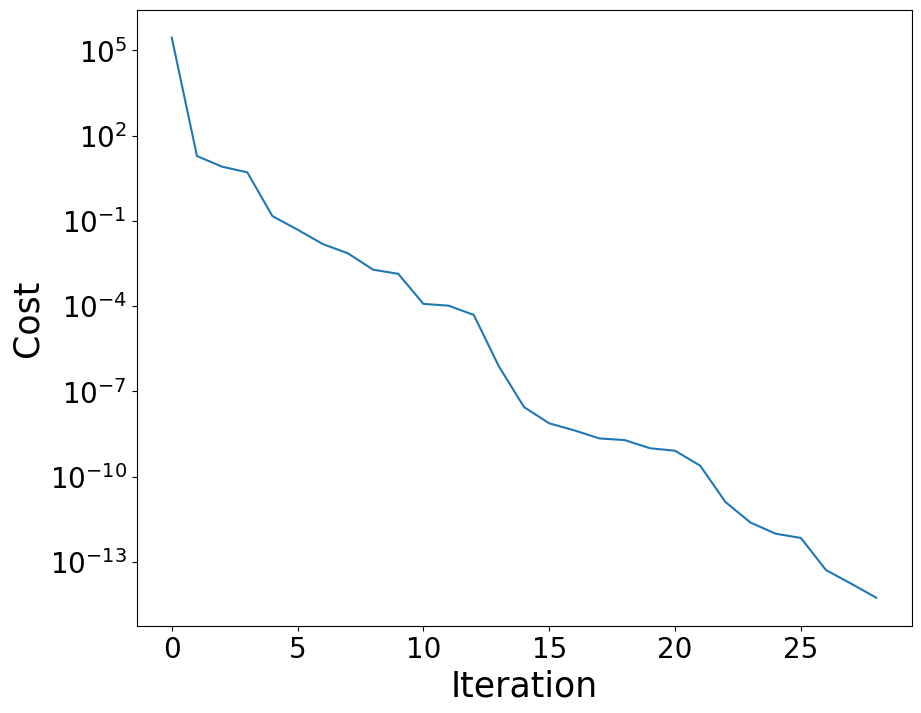}
		\label{fig:cost_no_noise}
	\end{subfigure}
	\hfill
	\begin{subfigure}{0.32\textwidth}
		\centering
		\includegraphics[width=\linewidth]{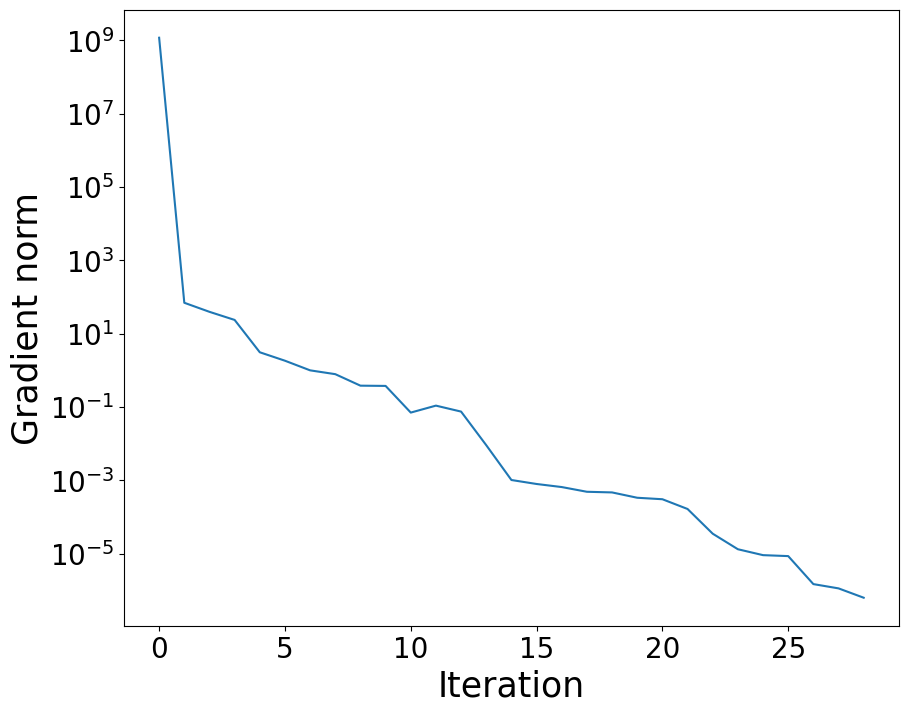}
		\label{fig:gradient_norm_no_noise}
	\end{subfigure}
	\caption{Results obtained for $\mu = 0$. In the left panel the red dots represent the data, and the black solid line the associated regression line with $(x,v)$ depicted as green dot and arrow. The middle and right panels show the cost and gradient norm histories, respectively.}
\end{figure}
\begin{figure}[ht]
	\centering
	\begin{subfigure}{0.32\textwidth}
		\centering
		\includegraphics[width=\linewidth]{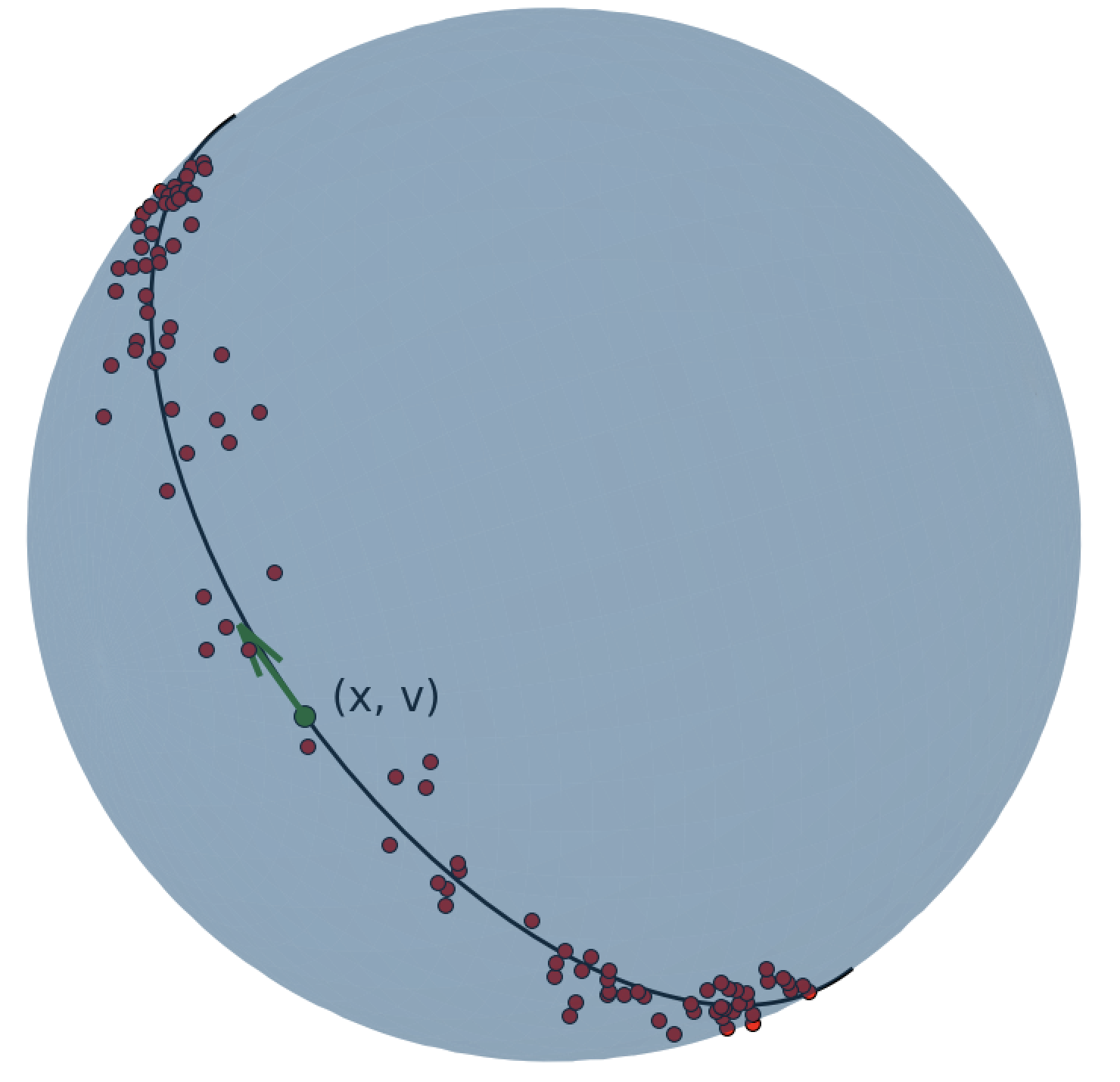}
		\label{fig:reg_line_small_noise}
	\end{subfigure}
	\hfill
	\begin{subfigure}{0.32\textwidth}
		\centering
		\includegraphics[width=\linewidth]{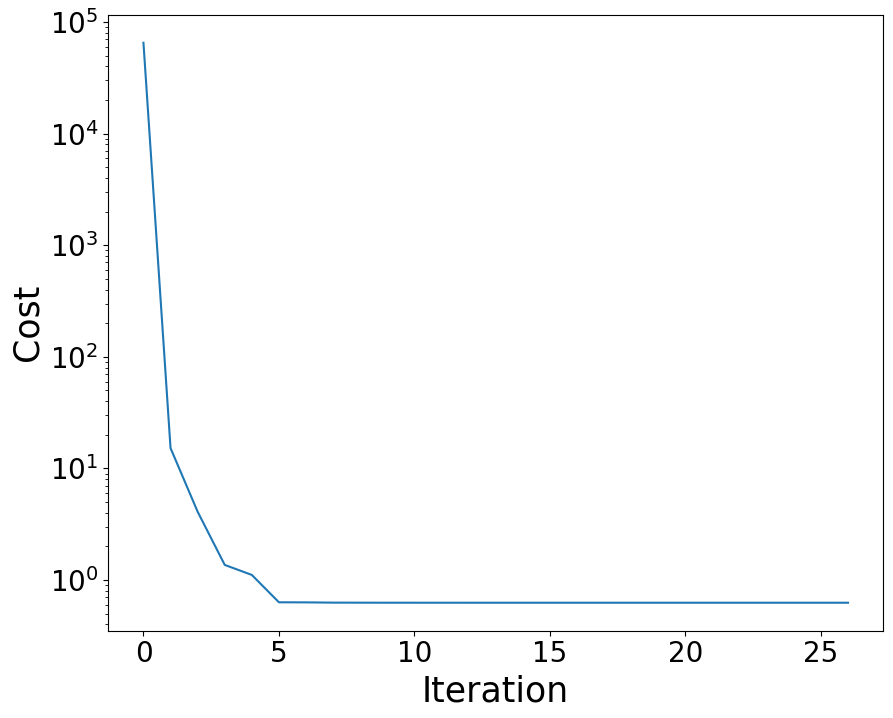}
		\label{fig:cost_small_noise}
	\end{subfigure}
	\hfill
	\begin{subfigure}{0.32\textwidth}
		\centering
		\includegraphics[width=\linewidth]{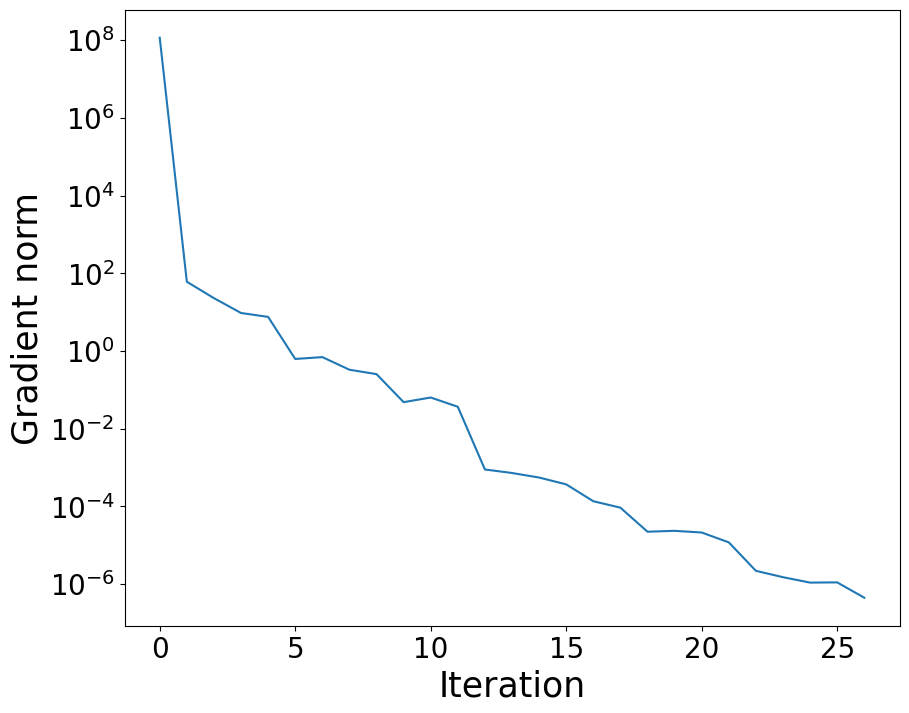}
		\label{fig:gradient_norm_small_noise}
	\end{subfigure}
	\caption{Results obtained for $\mu = 0.08$. In the left panel the red dots represent the data, and the black solid line the associated regression line with $(x,v)$ depicted as green dot and arrow. The middle and right panels show the cost and gradient norm histories, respectively.}
\end{figure}
\begin{figure}[ht]
	\centering
	\begin{subfigure}{0.32\textwidth}
		\centering
		\includegraphics[width=\linewidth]{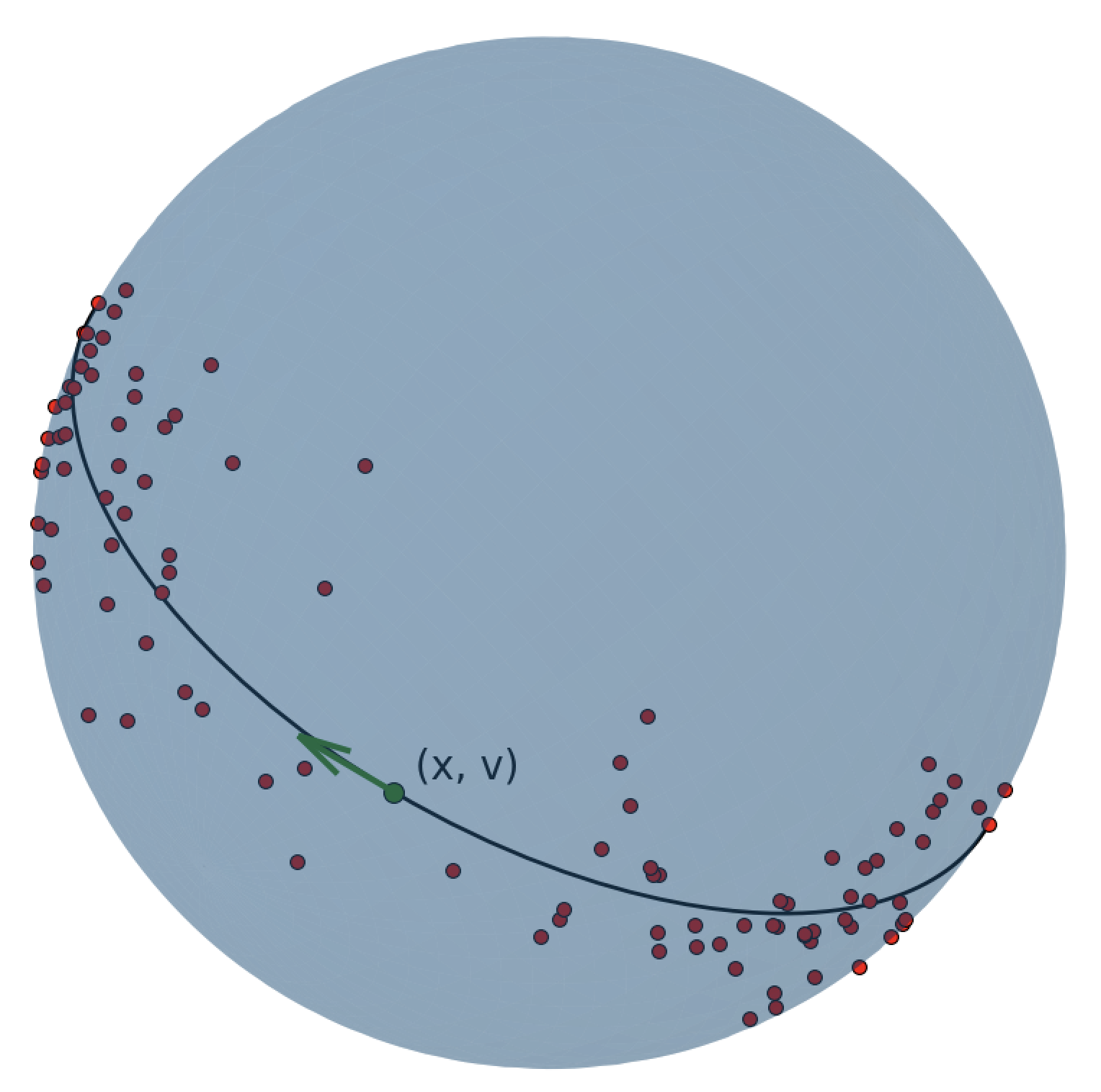}
		\label{fig:reg_line_high_noise}
	\end{subfigure}
	\hfill
	\begin{subfigure}{0.32\textwidth}
		\centering
		\includegraphics[width=\linewidth]{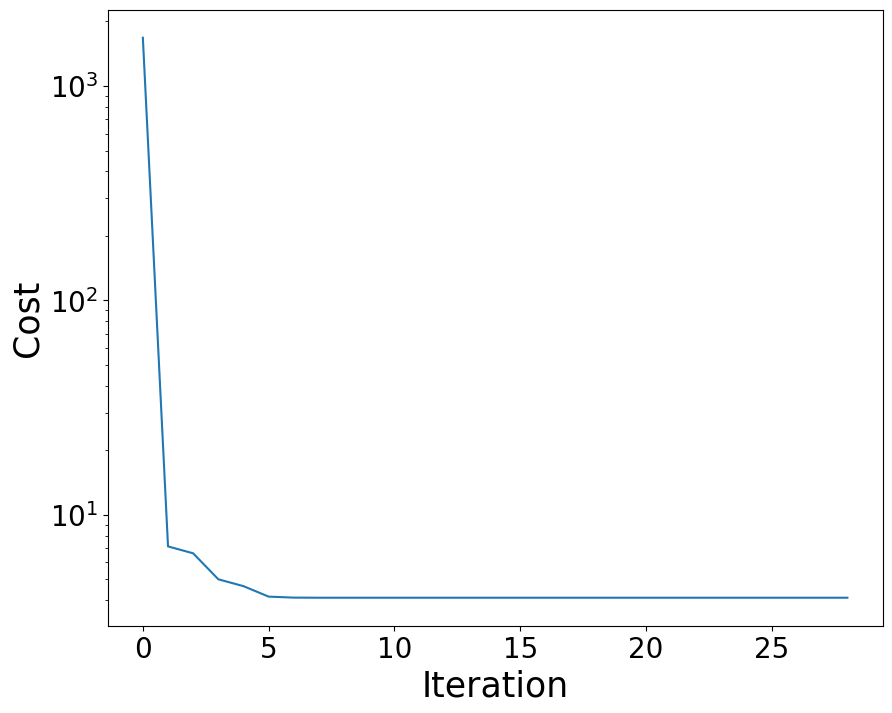}
		\label{fig:cost_high_noise}
	\end{subfigure}
	\hfill
	\begin{subfigure}{0.32\textwidth}
		\centering
		\includegraphics[width=\linewidth]{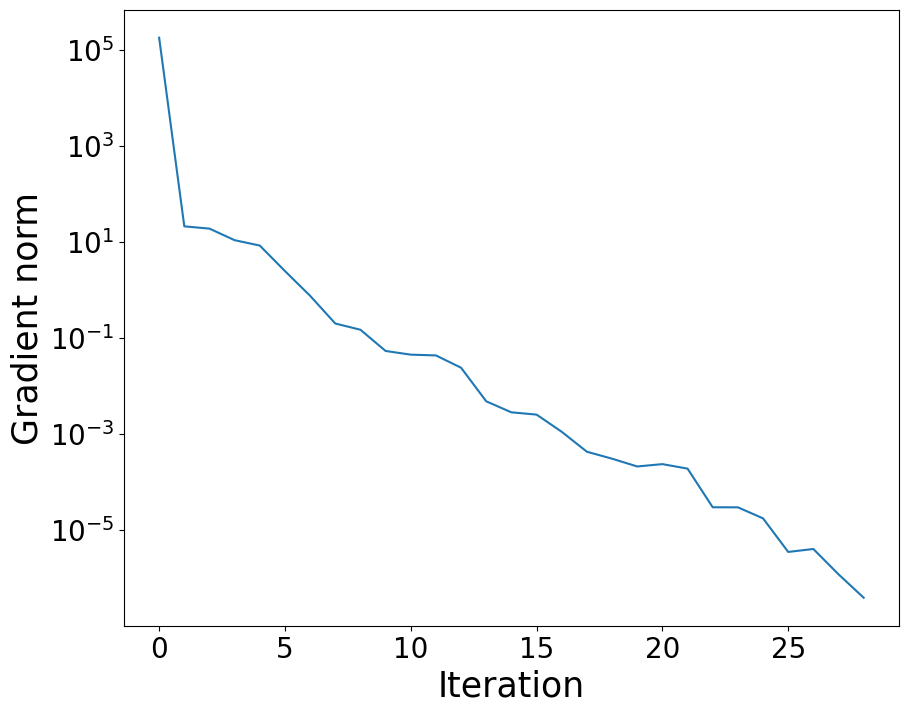}
		\label{fig:gradient_norm_high_noise}
	\end{subfigure}
	\caption{Results obtained for $\mu = 0.205$. In the left panel the red dots represent the data, and the black solid line the associated regression line with $(x,v)$ depicted as green dot and arrow. The middle and right panels show the cost and gradient norm histories, respectively.}
\end{figure}

\section{Conclusion}
\label{sec:Conc}
We presented a novel approach for regression on Riemannian manifolds by replacing the Riemannian exponential map with computationally cheaper retractions and the corresponding retraction-based distance. To guarantee the well-posedness of the objective function, we restrict the optimization domain to an open subset of the tangent bundle. It is important to mention that the optimization domain obtained in \cref{prop:well-posed} could be empty if the data is too scattered on the manifold $M$. Additionally, we have decided to delay the proof of existence of solutions for this problem for a later publication. Furthermore, we provided a general formula for the gradient of the objective function which coincides with that one given in \cite{fletcher2013} in the special case of geodesic regression. For embedded Riemannian submanifolds of a Euclidean space, we additionally provided a specific version of the gradient. Finally, a method for constructing a retraction on the tangent bundle was proven.

As a proof of concept, we applied the proposed framework to the $(n-1)$-dimensional, $p$-norm sphere together with the retraction by normalization. Showing that the restriction of the optimization domain albeit being restrictive it does hold in this case. It is worth highlighting that the choice of a suitable retraction remains a challenge. As the retraction imposes local conditions, it provides limited control over its global behaviour.

We investigated numerically the resulting optimization problem by generating three different synthetic data, where the solutions were known in advance. The experiments indicate favourable convergence behaviour of the proposed method. Although geodesics on the sphere are available in closed form and are therefore computationally tractable, the $(n-1)$-dimensional, $p$-norm sphere provide a suitable setting for validating the proposed framework. A major advantage of the presented approach is its potential applicability to a broader class of Riemannian manifolds for which the computation of geodesics is expensive or infeasible. Future work will focus on identifying classes of retractions that are particularly suitable for regression problems, extending the theoretical analysis to more general manifolds, and conducting further numerical experiments. In particular, we aim to apply the proposed framework to shape spaces, where the computation of geodesics is often prohibitively expensive. Specific applications include, for example, the shape space $B_e$ equipped with the Steklov--Poincar\'e metric \cite{schulz2016} or the manifold of planar triangular meshes \cite{herzog2022a, herzog2024}.

\bibliographystyle{plain}
\bibliography{citations}

\end{document}